\theoremstyle{plain}
\newtheorem{theorem}{Theorem}[section]
\newtheorem{lemma}[theorem]{Lemma}
\theoremstyle{definition}
\newtheorem{definition}[theorem]{Definition}
\theoremstyle{remark}
\title[The Daugavet property for Sobolev spaces over the plane]
{The Daugavet property for Sobolev spaces over the plane}
\author{Samir Hamad}
\email{samih49@zedat.fu-berlin.de}
\date{January 2026}
\subjclass[2020]{46B20, 46B04, 46E35}
\keywords{Daugavet property, Sobolev spaces}
\begin{document}

\begin{abstract}
We show that $W^{1,1}(\mathbb{R}^2)$ has the Daugavet property when endowed with the norm induced by the $L^1$-norm of the gradient, but fails to have the slice diameter two property when equipped with the usual Sobolev norm.
\end{abstract}

\maketitle

\section{Introduction}
We say that a normed space $X$ has the \emph{Daugavet property}, shortly $X \in \mathrm{DPr}$, if
\[
\|\mathrm{Id} + T\| = 1 + \|T\|,
\]
for every rank-one operator $T \colon X \to X$. This property was introduced in \cite{kadets2000banach}, where the following geometric characterization was observed.

\begin{lemma}
The following assertions are equivalent:
\begin{enumerate}
\item[(i)] $X$ has the Daugavet property.
\item[(ii)] For every $y \in S_X$, $x^{*} \in S_{X^{*}}$, and every $\varepsilon > 0$, there exists $x \in S_X$ such that
\[
x^{*}(x) \ge 1 - \varepsilon
\quad \text{and} \quad
\|x + y\| \ge 2 - \varepsilon.
\]
\item[(iii)] For every $\varepsilon > 0$ and every $y \in S_X$, the closed convex hull of the set
\[
\{\, u \in (1+\varepsilon) B_X : \|y + u\| \ge 2 - \varepsilon \,\}
\]
contains $S_X$.
\end{enumerate}
\end{lemma}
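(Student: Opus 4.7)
The plan is to prove the cyclic chain $(i)\Rightarrow(ii)\Rightarrow(iii)\Rightarrow(i)$. For $(i)\Rightarrow(ii)$ I would apply the Daugavet equation to the norm-one rank-one operator $Tz = x^{*}(z)\,y$: the identity $\|\mathrm{Id}+T\|=2$ gives, for any $\delta>0$, some $x\in S_X$ with $\|x + x^{*}(x)y\|\ge 2-\delta$. The triangle inequality forces $|x^{*}(x)|\ge 1-\delta$, and replacing $x$ by $-x$ if necessary I may assume $x^{*}(x)\ge 1-\delta$, whence
\[
\|x+y\|\ge \|x+x^{*}(x)y\|-|1-x^{*}(x)|\ge 2-2\delta,
\]
which is (ii) after relabelling $\varepsilon$.

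To go from (ii) to (iii) I would argue by contradiction using Hahn--Banach. If some $x_0\in S_X$ did not lie in the closed convex hull $C$ of the set described in (iii), I could separate strictly: there would exist $x^{*}\in S_{X^{*}}$ and $\alpha<1$ with $\sup_{u\in C} x^{*}(u)\le\alpha<x^{*}(x_0)$. Applying (ii) to this $y$ and $x^{*}$ with parameter $\delta<\min(\varepsilon,1-\alpha)$ would yield $x\in S_X\subset(1+\varepsilon)B_X$ satisfying $\|x+y\|\ge 2-\varepsilon$ (so $x\in C$) and $x^{*}(x)\ge 1-\delta>\alpha$, contradicting $x^{*}|_C\le\alpha$.

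For $(iii)\Rightarrow(i)$ I would take a rank-one $T=x^{*}\otimes y_0$ with $\|x^{*}\|=1$, set $t:=\|y_0\|=\|T\|>0$ and $y:=y_0/t$. For fixed $\varepsilon>0$, pick $x_0\in S_X$ with $x^{*}(x_0)\ge 1-\varepsilon$; by (iii), $x_0$ is approximated by a convex combination $\sum_i\lambda_i u_i$ of vectors with $\|u_i\|\le 1+\varepsilon$ and $\|u_i+y\|\ge 2-\varepsilon$. Since each $x^{*}(u_i)\le 1+\varepsilon$, a weighted-average argument extracts a single summand $u$ that additionally satisfies $x^{*}(u)\ge 1-O(\varepsilon)$. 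Choosing $\psi\in S_{X^{*}}$ that norms $u+y$ forces $\psi(u),\psi(y)\ge 1-O(\varepsilon)$, and hence
\[
\|u+Tu\|\ge \psi(u)+t\,x^{*}(u)\,\psi(y)\ge 1+t-O(\varepsilon);
\]
renormalising $u$ to the unit sphere costs only another $O(\varepsilon)$, so sending $\varepsilon\downarrow 0$ yields $\|\mathrm{Id}+T\|\ge 1+\|T\|$, the reverse inequality being trivial. The main obstacle is precisely this last step: one must absorb three independent sources of slack---the convex-combination approximation error, the radius $(1+\varepsilon)$ of the ball, and the factor $x^{*}(u)$ that replaces $1$ in front of $y_0$---into a single $O(\varepsilon)$ that remains uniform in $t$. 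The other two implications amount to a direct rank-one substitution and a textbook Hahn--Banach separation.
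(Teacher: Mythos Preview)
The paper does not prove this lemma; it merely states it and cites \cite{kadets2000banach} for the proof. Your cyclic argument $(i)\Rightarrow(ii)\Rightarrow(iii)\Rightarrow(i)$ is the standard one from that reference and is correct.

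One clarification on your stated ``main obstacle'' in $(iii)\Rightarrow(i)$: no uniformity in $t$ is required. The operator $T$ is fixed from the outset, so $t=\|T\|$ is a constant, and an estimate of the form
\[
\frac{\|u+Tu\|}{\|u\|}\ \ge\ \frac{(1-\varepsilon)+t(1-2\varepsilon)^2}{1+\varepsilon}
\]
already tends to $1+t$ as $\varepsilon\downarrow 0$ for that fixed $t$; nothing more is needed. The ``weighted-average argument'' you allude to is simply that $\sum_i \lambda_i\, x^{*}(u_i)\ge 1-2\varepsilon$ forces at least one index $j$ with $x^{*}(u_j)\ge 1-2\varepsilon$, and this $u_j$ already satisfies $\|u_j+y\|\ge 2-\varepsilon$ and $\|u_j\|\le 1+\varepsilon$ by membership in the set from~(iii).
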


I.\,K.~Daugavet was the first to show that $C[0,1]$ enjoys the Daugavet property \cite{daugavet1963property}, and a few years later it was shown that $L^1[0,1]$ also has the Daugavet property \cite{lozanovskii1966almost}. From the geometric characterization, it follows easily that every slice of $B_X$, i.e., sets of the form $\operatorname{Slice}(B_X,x^*,\varepsilon) := \{ x \in B_X \mid x^*(x) > 1 - \varepsilon\}$ for $x^* \in S_{X^*}$ and $\varepsilon > 0$, has diameter $2$, i.e., $X$ has the \emph{slice diameter two property}. Hence a Banach space with the Daugavet property cannot possess the Radon--Nikod\'ym property. A way deeper isomorphic consequence is that no Banach space with the Daugavet property embeds into a space with an unconditional basis; see \cite[Corollary 16]{shvydkoy2000geometric}.
  The following notion of Daugavet points was introduced in \cite{abrahamsen2020delta}. \begin{definition} Let $X$ be a normed space. An element $y \in S_X$ is called a \emph{Daugavet point} if, for every $\varepsilon > 0$, $B_X = \overline{\operatorname{conv}}\bigl\{\, u \in (1+\varepsilon) B_X : \|y + u\| \ge 2 - \varepsilon \,\bigr\}$. \end{definition}

It is easy to see that a normed space $X$ enjoys the Daugavet property whenever the set of Daugavet points is dense in $S_X$. 

Let $M$ be a pointed metric space. The space of all Lipschitz maps $F\colon M \to \mathbb{R}$ that vanish at $0$, equipped with the norm
\[
\|F\|_{L} = \sup \left\{ \frac{|F(m_1) - F(m_2)|}{d(m_1,m_2)} : m_1 \neq m_2 \in M \right\},
\]
is called $\operatorname{Lip}_0(M)$. In \cite[Theorem~3.1]{ivakhno2007daugavet} it was proved that $\operatorname{Lip}_0(M)$ has the Daugavet property whenever $M$ is a metric length space, i.e., for every pair of points $x,y \in M$ the distance $d(x,y)$ equals the infimum of the lengths of rectifiable curves joining them. It was later shown in \cite[Proposition~2.4 and Theorem~3.3]{garcia2018characterisation} that these metric spaces are precisely those for which $\operatorname{Lip_0}(M)$ has the Daugavet property. 
 It is therefore natural to investigate the Daugavet property of its $L^1$-counterpart $W^{1,1}(U)$, where $U$ is an open subset of $\mathbb{R}^n$ equipped with the Euclidean norm.
Here, $W^{1,1}(U)$ denotes the set of all functions $f \in L^1(U)$ for which there exists a vector field $\nabla f \in L^1(U;\mathbb{R}^n)$ satisfying
\[
\int_U f \, \operatorname{div} \varphi \, dx = - \int_U \nabla f \cdot \varphi \, dx
\]
for every test function $\varphi \in C_c^1(U;\mathbb{R}^n)$. This space is endowed either with the seminorm
\[
\mathcal{k} f \mathcal{k}_{\tilde{W}^{1,1}} := \mathcal{k} \nabla f \mathcal{k}_{L^1(U;\mathbb{R}^n)},
\]
which becomes a norm on the quotient obtained by factoring out all functions in $W^{1,1}(U)$ with vanishing gradient, or with the usual norm
\[
\mathcal{k}f \mathcal{k}_{W^{1,1}(U)} := \mathcal{k}f \mathcal{k}_{L^1(U)} + \mathcal{k}\nabla f \mathcal{k}_{L^1(U;\mathbb{R}^n)}.
\]

To distinguish \((W^{1,1}(U), \|\cdot\|_{\tilde{W}^{1,1}})\) from the Sobolev space equipped with its usual norm, we shall denote it by \(\tilde{W}^{1,1}(U)\). The space \(\tilde{W}^{1,1}(U)\) is also known as \(\dot{W}^{1,1}(U) \cap L^1(U)\), where \(\dot{W}^{1,1}(U)\) denotes the homogeneous Sobolev space.

\section{Preliminaries}

Let $g \in S_{L^1(U;\mathbb{R}^n)}$ and fix $\varepsilon > 0$. $\mathcal{L}^n$ denotes the $n$-dimensional Lebesgue measure. Then there exists $\delta > 0$ such that for every measurable set $V \subset U$ with $\mathcal{L}^n(V) < \delta$ we have
\[
\int_V |g| \, dx < \varepsilon.
\]
Consequently, for any $f \in S_{L^1(U;\mathbb{R}^n)}$ whose essential support has Lebesgue measure smaller than $\delta$, we obtain $\|f+g\|_{L^1} > 2 - 2\varepsilon$. Therefore, to show that \(\tilde{W}^{1,1}(U)\) is a Daugavet space, it suffices to prove that for every
\(f \in B_{\tilde{W}^{1,1}(U)}\) the gradient \(\nabla f\) can be approximated arbitrarily well by finite
convex combinations of functions \((f_i)\) in \(S_{\tilde{W}^{1,1}(U)}\) such that each \(f_i\) is
nonconstant only on a set of arbitrarily small measure. To this end, we first examine the level sets of a smooth function with compact support.

\begin{definition}
Let $\Phi : M \to N$ be a smooth map between smooth manifolds. A point $p \in M$ is called a \emph{regular point} of $\Phi$ if the differential
\[
d\Phi_p : T_p M \to T_{\Phi(p)} N
\]
is surjective; otherwise $p$ is called a \emph{critical point} of $\Phi$. A point $c \in N$ is called a \emph{regular value} of $\Phi$ if every point of the level set $\Phi^{-1}(c)$ is a regular point of $\Phi$; otherwise $c$ is called a \emph{critical value}. 
\end{definition}

For a smooth map $f \colon \mathbb{R}^n \to \mathbb{R}$, a point $c \in \mathbb{R}$ is a regular value if $\nabla f(x) \neq 0$ for every $x \in f^{-1}(c)$. The following theorem, due to Sard, states that the set of regular values has full measure; see \cite[p.~129]{lee2003smooth}.

\begin{theorem}[Sard's Theorem]
Suppose that $M$ and $N$ are smooth manifolds and let $\Phi : M \to N$ be a smooth map.
Then the set of critical values of $F$ has measure zero in $N$.
\end{theorem}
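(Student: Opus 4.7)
The plan is to prove Sard's theorem by the classical reduction-and-induction argument. Since a smooth manifold $M$ is second countable, it may be covered by countably many coordinate charts, and a countable union of null sets is null, so it suffices to treat the Euclidean case $\Phi\colon U \to \mathbb{R}^m$ with $U \subset \mathbb{R}^n$ open. I would then induct on $n$, the dimension of the source; the base case $n=0$ is immediate because the source is at most a point.

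For the inductive step, let $C$ denote the critical set of $\Phi$ and introduce the filtration
\[
C \supset C_1 \supset C_2 \supset \cdots, \qquad C_k := \{\, x \in U : \partial^{\alpha}\Phi(x) = 0 \text{ for all } 1 \le |\alpha| \le k\,\}.
\]
The strategy is to show $\Phi(C)$ has measure zero in three steps. \emph{Step one:} $\Phi(C \setminus C_1)$ is null. At a point $x \in C \setminus C_1$ some first partial, say $\partial_{x_1}\Phi_1$, is nonzero, so the map $h(x_1,\dots,x_n) = (\Phi_1(x), x_2, \dots, x_n)$ is a local diffeomorphism by the inverse function theorem, and $\Phi \circ h^{-1}$ takes the form $(t,y) \mapsto (t, g(t,y))$. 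A point $(t,y)$ is critical for $\Phi \circ h^{-1}$ iff $y$ is critical for the sliced map $g_t\colon \mathbb{R}^{n-1} \to \mathbb{R}^{m-1}$, so the inductive hypothesis combined with Fubini yields the conclusion.

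\emph{Step two:} $\Phi(C_k \setminus C_{k+1})$ is null for every $k \ge 1$. At $x \in C_k \setminus C_{k+1}$ there is some derivative $\partial^{\alpha}\Phi_j$ of order exactly $k$ that vanishes at $x$ but has a nonzero first partial there; hence in a neighborhood of $x$ the set $C_k$ is contained in the smooth hypersurface $\{\partial^{\alpha}\Phi_j = 0\}$, and restricting $\Phi$ to this $(n-1)$-dimensional manifold finishes this case by the inductive hypothesis. \emph{Step three, the heart of the argument:} for $k$ chosen with $m(k+1) > n$, the set $\Phi(C_k)$ is null. Covering a bounded portion of $C_k$ by cubes of side $r$, Taylor's remainder estimate together with the vanishing of all derivatives up to order $k$ gives the bound $|\Phi(y) - \Phi(x)| \le Cr^{k+1}$ on each cube, so its image sits in a ball of radius $Cr^{k+1}$; summing over the $O(r^{-n})$ cubes produces total volume $O(r^{m(k+1)-n})$, which tends to zero as $r \to 0$.

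The main obstacle I expect is the bookkeeping in Steps one and two: the local reductions furnished by the inverse and implicit function theorems must be patched into countably many open pieces so that the inductive hypothesis is genuinely applicable, and in Step one one has to be careful that Fubini is being applied to a measurable family. With the three steps in place, the decomposition
\[
\Phi(C) = \Phi(C \setminus C_1) \cup \bigcup_{k=1}^{K-1} \Phi(C_k \setminus C_{k+1}) \cup \Phi(C_K),
\]
for any $K$ with $m(K+1) > n$, exhibits $\Phi(C)$ as a countable union of null sets, completing the induction.
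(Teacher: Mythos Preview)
Your outline is the standard, correct proof of Sard's theorem: reduce to the Euclidean case by second countability, induct on the source dimension, and handle the critical set via the filtration $C \supset C_1 \supset C_2 \supset \cdots$ with the three steps you describe (local straightening for $C\setminus C_1$, implicit-function reduction for $C_k\setminus C_{k+1}$, and the Taylor-expansion cube-counting estimate for $C_K$ with $m(K+1)>n$). The bookkeeping concerns you raise are real but routine.

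The paper, however, does not prove this theorem at all: it merely states Sard's theorem and cites Lee's \emph{Introduction to Smooth Manifolds} (p.~129) for the proof. So there is nothing to compare your argument against; you have supplied a full proof where the paper is content to quote the result as background.
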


Regular values play a central role in the sequel due to the following preimage theorem; see \cite[p.~106]{lee2003smooth}.

\begin{theorem}[Preimage Theorem]
If $c \in N$ is a regular value, then for a smooth map
$\Phi : M \rightarrow N$ the preimage $\Phi^{-1}(c)$ is a properly embedded
submanifold of dimension $\dim M - \dim N$.
\end{theorem}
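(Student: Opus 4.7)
The plan is to reduce the global statement to a purely local normal-form computation by invoking the submersion theorem at each point of the preimage, and then to deduce the properness of the inclusion from elementary point-set topology.

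First I would fix an arbitrary $p \in \Phi^{-1}(c)$. Since $c$ is a regular value, $d\Phi_p$ is surjective and hence of maximal rank $n = \dim N$; by semicontinuity of rank, $\Phi$ is a submersion on an open neighborhood of $p$. The rank theorem (a standard corollary of the inverse function theorem) then supplies smooth charts $(U,\varphi)$ around $p$ and $(V,\psi)$ around $c$ in which $\Phi$ takes the canonical form
\[
(x_1,\dots,x_m) \longmapsto (x_1,\dots,x_n),
\]
where $m = \dim M$. In these adapted coordinates, $\Phi^{-1}(c) \cap U$ is exactly the coordinate slice $\{x_1 = \cdots = x_n = 0\}$, an embedded submanifold of dimension $m-n$. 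Letting $p$ range over $\Phi^{-1}(c)$, the collection of such slice charts assembles into an atlas exhibiting $\Phi^{-1}(c)$ as an embedded submanifold of $M$ of codimension $n$.

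To establish properness of the inclusion $\iota \colon \Phi^{-1}(c) \hookrightarrow M$, I would observe that $\Phi^{-1}(c)$ is closed in $M$ as the preimage of a singleton under a continuous map. Consequently, for any compact $K \subset M$, the set $\iota^{-1}(K) = K \cap \Phi^{-1}(c)$ is closed in the compact set $K$ and therefore compact, which is precisely the definition of properness.

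The only genuinely non-formal step is the rank theorem itself. I would prove it by writing $\Phi$ locally as $(\Phi_1,\dots,\Phi_n)$, reordering the coordinates on $M$ so that the square matrix $(\partial \Phi_i/\partial x_j)_{i,j=1}^n$ is invertible at $p$, defining the auxiliary map $\Psi(x) := (\Phi_1(x),\dots,\Phi_n(x),x_{n+1},\dots,x_m)$, and applying the inverse function theorem to $\Psi$ to obtain a local diffeomorphism that straightens $\Phi$ into the desired canonical projection. Aside from this invocation, every remaining assertion is bookkeeping, so this is where I expect the only real obstacle to lie.
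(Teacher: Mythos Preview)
Your argument is correct and is essentially the standard textbook proof: local slice charts via the rank theorem, plus closedness of the level set for properness. The paper, however, does not prove this statement at all; it simply cites \cite[p.~106]{lee2003smooth} (Lee's \emph{Introduction to Smooth Manifolds}), where the proof given is precisely the one you outline. So there is nothing to compare---you have supplied the argument the paper outsources.
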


Note that every smooth connected $1$-manifold is diffeomorphic either to $S^1$ or to $\mathbb{R}$; see \cite[p.~398]{lee2003smooth}, and that the connected components of a smooth manifold are open in the subspace topology and are themselves smooth manifolds of the same dimension. By decomposing $f^{-1}(c)$ into its disjoint connected components for a fixed $f \in C_c^\infty(\mathbb{R}^2;\mathbb{R})$ and a regular value $c$, we then conclude from the preceding theorem that the level set $f^{-1}(c)$ is a finite disjoint union of Jordan curves, each diffeomorphic to $S^1$, since $\mathbb{R}$ cannot occur among the connected components of $f^{-1}(c)$ due to the compactness of $f^{-1}(c)$. Using Sard's theorem, we conclude that for almost every $c \in \mathbb{R}$, the level set $f^{-1}(c)$ is a disjoint union of finitely many Jordan curves, each diffeomorphic to $S^1$.

\setlength{\parskip}{1em}
By the Jordan curve theorem, every Jordan curve $J$ separates the plane into a bounded region $\operatorname{int} J$ and an unbounded region $\operatorname{ext} J$; by a theorem of Schoenflies, the interior $\operatorname{int} J$ is homeomorphic to the open unit ball in $\mathbb{R}^2$, and hence homeomorphic to $\mathbb{R}^2$; see \cite[Theorem~3.1]{thomassen1992jordan}. Using a theorem of Janiszewski together with a straightforward induction argument, we obtain the following lemma, which is also the reason why we work in the plane; see \cite[Theorem~1]{bing1946generalizations}.

\begin{lemma}
Let $J_1$ be a Jordan curve in $\mathbb{R}^2$, and let $\{J_i\}_{i=2}^m$ be a finite collection of pairwise disjoint Jordan curves contained in $\operatorname{int} J_1$. Then \[
\operatorname{int} J_1 \setminus \bigcup_{i=2}^m \overline{\operatorname{int} J_i},
\]
is path-connected. Moreover, for any finite collection of pairwise disjoint Jordan curves $\{J_i\}_{i=1}^n$ in $\mathbb{R}^2$, the complement $\mathbb{R}^2 \setminus \bigcup_{i=1}^n \overline{\operatorname{int} J_i}$ is path-connected.
\end{lemma}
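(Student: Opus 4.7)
I would deduce both parts of the lemma from a single unified statement in $S^2 = \mathbb{R}^2 \cup \{\infty\}$: for any pairwise disjoint closed topological disks $A_1, \ldots, A_n \subset S^2$, the complement $S^2 \setminus \bigcup_{i=1}^n A_i$ is connected. This unified statement is then proved by induction on $n$, with Janiszewski's theorem as the key tool.

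\noindent\textbf{Preliminary dichotomy.} The preparatory step is the following nesting dichotomy: for two disjoint Jordan curves $J, J'$ in $\mathbb{R}^2$, either $\overline{\operatorname{int} J} \cap \overline{\operatorname{int} J'} = \emptyset$, or one closed interior is contained in the open interior of the other. This is a short case analysis using the Jordan curve theorem: each of $J, J'$ lies in exactly one complementary component of the other, and the four sign combinations determine the relative position of the interiors (ruling out the mutually nested case). A direct consequence is that I may discard any $J_j$ whose closed interior is properly contained in some $\overline{\operatorname{int} J_i}$ without altering the union to be removed, reducing both statements of the lemma to a setting in which the closed interiors appearing in the union are pairwise disjoint closed topological disks (the disk structure coming from Schoenflies).

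\noindent\textbf{Induction via Janiszewski.} The unified claim is proved by induction on $n$. The base case $n = 1$ is immediate, since $S^2 \setminus A_1$ is an open disk. For the inductive step, set $X = A_n$ and $Y = \bigcup_{i < n} A_i$: both are closed in $S^2$, the intersection $X \cap Y$ is empty (hence vacuously connected), $X$ does not separate $S^2$ by the base case, and $Y$ does not separate $S^2$ by the inductive hypothesis. Janiszewski's theorem then yields that $X \cup Y = \bigcup_{i=1}^n A_i$ does not separate $S^2$.

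\noindent\textbf{Deducing both parts and main obstacle.} For the moreover statement, specialize to $A_i = \overline{\operatorname{int} J_i}$. Since the union is bounded, $\infty$ admits a disk neighborhood inside the connected open set $S^2 \setminus \bigcup A_i$, and removing this single point preserves connectedness; openness in $\mathbb{R}^2$ then upgrades connectedness to path-connectedness of $\mathbb{R}^2 \setminus \bigcup \overline{\operatorname{int} J_i}$. For the first statement, take $A_1$ to be the closure in $S^2$ of the unbounded component of $\mathbb{R}^2 \setminus J_1$ (a closed topological disk containing $\infty$ by Schoenflies) and $A_i = \overline{\operatorname{int} J_i}$ for $i \geq 2$; the dichotomy applied to each pair $(J_i, J_1)$ yields $\overline{\operatorname{int} J_i} \subset \operatorname{int} J_1$, so $A_1$ is disjoint from the remaining $A_i$'s, and after the discarding reduction the full collection is pairwise disjoint. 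Thus $S^2 \setminus \bigcup A_i = \operatorname{int} J_1 \setminus \bigcup_{i \geq 2} \overline{\operatorname{int} J_i}$ is connected by the unified claim, and open in $\mathbb{R}^2$, hence path-connected. The main obstacle is the preliminary dichotomy, whose verification, though elementary, requires careful bookkeeping with the Jordan curve theorem; once that is in place, the Janiszewski induction is formally routine, provided the empty intersection $X \cap Y$ is accepted as (vacuously) connected.
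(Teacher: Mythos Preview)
Your proposal is correct and follows precisely the route the paper indicates: the paper does not spell out a proof but simply states that the lemma follows from ``a theorem of Janiszewski together with a straightforward induction argument,'' citing Bing, and your induction with $X = A_n$, $Y = \bigcup_{i<n} A_i$ is exactly that. Your passage to $S^2$ to treat both assertions at once, and your explicit handling of the nesting dichotomy (needed because the hypothesis only gives pairwise disjoint \emph{curves}, not disjoint closed interiors), are details the paper leaves implicit; they are correct and make the argument self-contained.
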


We will also make use of the coarea formula; for a proof, see \cite[p.~134--139]{evans2018measure}.

\begin{theorem}[Coarea formula]
Let $f \colon \mathbb{R}^n \to \mathbb{R}$ be a Lipschitz continuous function with $n \ge 1$. Then, for every $\mathcal{L}^n$-measurable set $A \subset \mathbb{R}^n$, the following identity holds:
\[
\int_A |\nabla f(x)|\,\mathrm{d}x
=
\int_{\mathbb{R}} 
\mathcal{H}^{n-1}\!\left(A \cap f^{-1}(\{t\})\right)\,\mathrm{d}t,
\]
where $\mathcal{H}^{n-1}$ denotes the $(n-1)$-dimensional Hausdorff measure.
\end{theorem}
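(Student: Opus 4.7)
My plan is to prove the formula first for $C^\infty$-functions with compact support and then extend to arbitrary Lipschitz functions by approximation. The case of a general measurable set $A$ reduces to $A = \mathbb{R}^n$ by a standard argument: both sides define countably additive, positive set functions in $A$, so uniqueness of Radon measures together with inner regularity lets me upgrade the identity on $\mathbb{R}^n$ to arbitrary Lebesgue measurable sets.

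\emph{The smooth case.} Suppose $f \in C_c^\infty(\mathbb{R}^n)$ and decompose $\mathbb{R}^n = C \sqcup R$ according to whether $\nabla f$ vanishes or not. The contribution of $C$ to the left-hand side is zero. On the right-hand side, Sard's theorem yields $\mathcal{L}^1(f(C)) = 0$, so the $t$-integral effectively runs over values whose preimages lie entirely in $R$. On the open set $R$, the implicit function theorem supplies, around each point, a chart $\Phi \colon U \to V \times I \subset \mathbb{R}^{n-1} \times \mathbb{R}$ in which $f$ becomes projection onto the last coordinate. A direct computation shows that on each slice $\Phi^{-1}(V \times \{t\})$ the $(n-1)$-dimensional Hausdorff measure coincides with Lebesgue measure on $V$ weighted by $|\nabla f|^{-1}$, so the coarea identity on $U$ collapses to Fubini's theorem on $V \times I$. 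A countable partition of unity subordinate to such a cover of $R$ then patches the local identities into the global one.

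\emph{Extension to Lipschitz $f$.} Rademacher's theorem gives $\nabla f$ almost everywhere with $|\nabla f| \le \operatorname{Lip}(f)$. Given $\varepsilon > 0$, Whitney's extension theorem furnishes $g \in C^1(\mathbb{R}^n)$ with $\operatorname{Lip}(g) \le C\, \operatorname{Lip}(f)$ and $\mathcal{L}^n(E) < \varepsilon$ for $E := \{f \neq g\}$; after a mild mollification the smooth case applies to $g$. The left-hand sides of the coarea identity for $f$ and $g$ differ by at most $2\, \operatorname{Lip}(f)\, \varepsilon$. To control the right-hand sides, one uses the one-sided weak coarea inequality $\int_{\mathbb{R}} \mathcal{H}^{n-1}(E \cap h^{-1}(t))\, dt \le C\, \operatorname{Lip}(h)\, \mathcal{L}^n(E)$ for any Lipschitz $h$. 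Combined with $f \equiv g$ off $E$, this forces both sides to converge to the same quantity as $\varepsilon \to 0$.

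The main obstacle is precisely the weak coarea inequality above: it is what makes the Whitney approximation robust enough to transfer the identity from $g$ back to $f$. I would establish it via a Vitali covering of $E$ by small balls on which $h$ is close to its affine tangent (again invoking Rademacher), apply Fubini to that affine approximation on each ball to obtain a local bound, and sum across the cover while letting the radii tend to zero. Once this inequality is in hand, the Lipschitz case follows immediately from the smooth case and the approximation argument.
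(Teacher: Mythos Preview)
The paper does not prove the coarea formula; it merely states it and cites Evans--Gariepy, \emph{Measure Theory and Fine Properties of Functions}, pp.~134--139. So there is no in-paper argument to compare against. Your outline is in fact close in spirit to the proof in that reference: one first establishes the formula for linear (or locally $C^1$) maps via Fubini, proves the one-sided weak coarea inequality for arbitrary Lipschitz maps, and then uses a Lusin--Whitney approximation to transfer the identity from the $C^1$ setting to the Lipschitz one.

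Two places in your sketch are underspecified. First, the reduction ``$A=\mathbb{R}^n$ suffices'' is not a consequence of knowing the identity on the single set $\mathbb{R}^n$: agreement of two measures on one set does not determine them. What you need is agreement on a generating $\pi$-system (e.g.\ open boxes), or---as Evans--Gariepy do---to prove the weak inequality for \emph{all} measurable $A$ and then argue that equality on $\mathbb{R}^n$ together with additivity and the one-sided bound forces equality on every subset. Second, your proposed proof of the weak coarea inequality (Vitali cover by balls on which $h$ is nearly affine, then Fubini on each ball) is the right idea but is genuinely the hard step; in the reference it is carried out via the Eilenberg inequality / isodiametric inequality rather than Rademacher-based linearization, and making your version rigorous requires care with the error terms when summing over the cover. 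These are fixable gaps rather than wrong turns.
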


Note that, as a corollary, we have the known formula $\int_{\mathbb{R}^n} |\nabla f| = \int_\mathbb{R} \mathcal{H}^{1}(f^{-1}(t)) \, dt$, and therefore $\mathcal{H}^{1}(f^{-1}(\cdot)) \in L^1(\mathbb{R})$ for $f \in \tilde{W}^{1,1}(\mathbb{R}^2)$.

\section{Main results}

\begin{theorem}
The normed space $\tilde{W}^{1,1}(\mathbb{R}^2)$ enjoys the Daugavet property.   
\end{theorem}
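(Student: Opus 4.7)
The plan is to invoke the reduction already carried out in the preliminaries: it suffices to show that for every $f \in B_{\tilde{W}^{1,1}(\mathbb{R}^2)}$ and every $\delta,\varepsilon>0$, the vector field $\nabla f$ can be approximated in $L^1(\mathbb{R}^2;\mathbb{R}^2)$ by a finite convex combination $\sum_k \lambda_k \nabla \tilde g_k$ with $\tilde g_k \in S_{\tilde{W}^{1,1}(\mathbb{R}^2)}$ and $\mathcal{L}^2(\supp \nabla \tilde g_k)<\delta$. By the standard density of $C_c^\infty(\mathbb{R}^2)$ in $\tilde{W}^{1,1}(\mathbb{R}^2)$ I would first reduce to $f \in C_c^\infty(\mathbb{R}^2)$ with $\|\nabla f\|_{L^1}=1$.

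For the construction I would employ a layer-cake decomposition keyed to the level sets. Using Sard's theorem I would pick regular values $c_0<c_1<\dots<c_N$ covering the range of $f$; at each $c_k$ the preimage theorem and the compactness of $f^{-1}(c_k)$ deliver the Jordan-curve picture of the level sets recorded in the preliminaries. Setting
\[
g_k(x) := \min\bigl\{\max\{f(x)-c_k,\,0\},\,c_{k+1}-c_k\bigr\},
\]
one checks that $\nabla g_k = \nabla f \cdot \mathbf{1}_{\{c_k < f < c_{k+1}\}}$ almost everywhere, that $\sum_k g_k$ telescopes to $f-c_0$ on the range of $f$, and that the slabs $\{c_k < f < c_{k+1}\}$ are pairwise disjoint modulo null sets. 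Normalising via $\lambda_k := \|\nabla g_k\|_{L^1}$ and $\tilde g_k := g_k/\lambda_k$, this disjointness yields $\sum_k \lambda_k = \|\nabla f\|_{L^1}=1$, so $\nabla f = \sum_k \lambda_k \nabla \tilde g_k$ is a genuine convex combination of unit vectors in $S_{\tilde{W}^{1,1}(\mathbb{R}^2)}$.

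What remains is to select the $c_k$ so that every slab $\{c_k < f < c_{k+1}\}\cap \supp f$ has Lebesgue measure less than $\delta$. A Fubini argument bounds
\[
\int_{\mathbb{R}} \mathcal{L}^2\bigl(\{c < f < c+\Delta\}\cap \supp f\bigr)\,dc \;\le\; \Delta \cdot \mathcal{L}^2(\supp f),
\]
so for $\Delta$ small the heights $c$ at which the slab measure exceeds $\delta$ form a set of arbitrarily small $1$-dimensional measure; together with Sard's theorem this lets me pick a partition of near-uniform spacing $\Delta$ meeting both conditions. The Jordan-curve description of the level sets and the Janiszewski-type lemma on connectivity of slab complements are what make this planar geometric picture available; that is precisely where the restriction to $\mathbb{R}^2$ enters, in contrast to higher dimensions where level sets are higher-dimensional manifolds.

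The main obstacle I anticipate is the uniform control of the slab measures. A smooth compactly supported $f$ may have near-plateaus at individual heights around which the strip $\{c-\Delta/2 < f < c+\Delta/2\}$ has disproportionately large area, so one cannot simply equispace the partition along the range; instead the partition points must be distributed according to the pushforward of $\mathcal{L}^2 \llcorner \supp f$ by $f$ and then perturbed to regular values via Sard. Once this is arranged, every $\tilde g_k$ is nonconstant only on a set of measure less than $\delta$, the preliminary estimate gives $\|\nabla \tilde g_k + \nabla y\|_{L^1} > 2 - 2\varepsilon$ for every $y \in S_{\tilde{W}^{1,1}(\mathbb{R}^2)}$, and condition (iii) of the opening lemma yields the Daugavet property.
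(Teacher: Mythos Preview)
Your layer--cake idea is the right one and is in fact simpler than the paper's construction, but there is a genuine gap: the truncations $g_k$ need not lie in $\tilde W^{1,1}(\mathbb{R}^2)$. Recall that by definition $\tilde W^{1,1}(\mathbb{R}^2)\subset L^1(\mathbb{R}^2)$; the seminorm is a norm precisely because the only constant in $L^1(\mathbb{R}^2)$ is $0$. Now if $c_k<0$, then for every $x\notin\supp f$ one has $f(x)=0$ and hence
\[
g_k(x)=\min\{\max\{-c_k,0\},\,c_{k+1}-c_k\}=\min\{-c_k,\,c_{k+1}-c_k\}>0,
\]
so $g_k$ equals a nonzero constant on the complement of a compact set and thus $g_k\notin L^1(\mathbb{R}^2)$. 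Consequently $g_k/\|\nabla g_k\|_{L^1}\notin S_{\tilde W^{1,1}}$, and your convex combination is not a combination of unit vectors in the space. This is exactly the difficulty the paper's Jordan--curve machinery is designed to overcome: by identifying, for each connected component $A_d$ of a slab, an outermost enclosing Jordan curve $C_w$, the paper defines $f_i^d$ to vanish on $\operatorname{ext}C_w$ while matching $f$ (up to a constant) on $A_d$, thereby forcing compact support regardless of the sign of the level.

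Your route can be repaired without Jordan curves: apply the truncation separately to $f^{+}=\max(f,0)$ and $f^{-}=\max(-f,0)$. All levels are then nonnegative, every $g_k$ is supported in $\overline{\{f^{\pm}>c_k\}}\subset\supp f$, and since $\nabla f^{+}$ and $\nabla f^{-}$ have disjoint supports the two families of normalized pieces combine into a single convex combination summing to~$1$. Two remarks follow. First, once fixed this way the argument uses nothing two--dimensional, so your sentence attributing the restriction to $\mathbb{R}^2$ to the Jordan--curve picture is misplaced: that picture is essential for the \emph{paper's} construction, not for yours. Second, requiring the $c_k$ to be regular values is unnecessary for the truncation (it is needed only for the paper's topological argument); what you actually need is a finite partition of the range into open intervals each carrying small $\nu$--mass, where $\nu=f_{\ast}(\mathcal{L}^2\llcorner\{\nabla f\neq 0\})$. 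This $\nu$ is nonatomic (on any plateau $\nabla f=0$ a.e.), so such a partition exists for every $\delta>0$; your Fubini estimate is correct but the ``perturb to regular values via Sard'' step is both unnecessary and incompatible with placing partition points at plateau heights.
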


\begin{proof}
Let $f \in C_c^\infty(\mathbb{R}^2)$ and $\varepsilon > 0$ with $\|f\|_{\tilde{W}^{1,1}} = 1$, and denote $f(\mathbb{R}^2) = [a,b]$. By the previous discussion, for every regular value $c \in \mathbb{R}$ the level set $f^{-1}(c)$ is a finite disjoint union of Jordan curves, each diffeomorphic to $S^1$. In particular, $\mathcal{L}^2(f^{-1}(c)) = 0$. Let $N \subset [a,b]$ denote the set of non-regular values, which has Lebesgue measure zero by Sard's theorem. Furthermore, let $\mathcal{F}$ be the collection of all intervals of the form $(x - p_x, x + p_x) \subset (a,b)$ such that $x$, $x - p_x$, and $x + p_x$ are regular values and $\mathcal{L}^2(f^{-1}((x - p_x, x + p_x))) < \varepsilon$.
 Then $\mathcal{F}$ forms a Vitali covering of $(a,b) \setminus N$, and hence, by Vitali's covering theorem; see \cite[p.~319]{elstrodt1996mass}, there exists a countable disjoint subcollection $((q_i - p_i, q_i + p_i))_{i=1}^\infty \subset \mathcal{F}$ such that $
\mathcal{L}\Big([a,b] \setminus( N\,\cup\, \bigcup_{i=1}^\infty (q_i - p_i, q_i + p_i))\Big) = 0$ and therefore 
\[
\mathcal{L}\Big([a,b] \setminus\bigcup_{i=1}^\infty (q_i - p_i, q_i + p_i)\Big)\Big) = 0 \tag{1}
\]
Fix $i \in \mathbb{N}$ and consider the open interval $(q_i-p_i, q_i+p_i)$. Let 
\[
f^{-1}((q_i-p_i, q_i+p_i)) = \bigcup_{j=1}^\infty A_j
\] 
be the decomposition into the disjoint open connected components. Then, since $f \in C_c^\infty(\mathbb{R}^2) \subset C(\mathbb{R})$, the boundary of each component satisfies
\[
\partial A_j \subset f^{-1}(\{q_i-p_i\}) \cup f^{-1}(\{q_i+p_i\}) = \bigcup_{k=1}^m C_k,
\] 
where the $C_k$ are the disjoint Jordan curves corresponding to the level sets $f^{-1}(\{q_i-p_i\})$ and $f^{-1}(\{q_i+p_i\})$, which exist since, by construction, $\{q_i-p_i\}$ and $\{q_i+p_i\}$ are regular.
 
 Now fix $d \in \mathbb{N}$. Consider all $C_k$ satisfying $\operatorname{int} C_k \cap A_d \neq \emptyset$. This collection of Jordan curves is nonempty, since every $a \in A_d$ is enclosed by some $C_k$. Otherwise, Lemma~2.4 and $A_d$ being the connected component would imply that $A_d = \mathbb{R}^2 \setminus \bigcup_{k = 1}^m \overline{\operatorname{int}C_k}$, which contradicts $\mathcal{L}^2(A_d) \leq \mathcal{L}^2(f^{-1}((q_i-p_i, q_i+p_i))) < \varepsilon < \infty$. Now choose $C_w$ with $\operatorname{int} C_w \cap A_d \neq \emptyset$ such that for every $C_k \subset \operatorname{int} C_w$ we have $\operatorname{int} C_k \cap A_d = \emptyset$. This is possible since the collection $\{C_k\}$ is finite. Since $A_d$ is connected, it follows that $A_d \subset \operatorname{int} C_w$.

Suppose that $x \in (\operatorname{int} C_w) \setminus A_d$. Also suppose that $x$ is not contained in $\overline{\operatorname{int} C_k}$ for any $C_k \subset \operatorname{int} C_w$. By the condition on $C_w$, we also see that no point of $\operatorname{int}(C_w) \cap A_d$ is enclosed by a Jordan curve $C_k$ contained in $\operatorname{int} C_w$. By Lemma~2.4, we can therefore construct a path from $x$ to an arbitrary point in $\operatorname{int}(C_w) \cap A_d$ that does not intersect any $C_k \subset \operatorname{int} C_w$.
 Since $A_d$ is path-connected, this implies that $x \in A_d$, which yields a contradiction. Therefore,
\[
\operatorname{int} C_w
= A_d \,\cup\, \bigcup_{C_j \subset \operatorname{int} C_w} \overline{\operatorname{int} C_j}. \tag{2}
\]

By~(2), there are only finitely many connected components $A_j$, since there are only finitely many Jordan curves $C_k$. Without loss of generality suppose that $C_w$ belongs to the level set of $p_i+q_i$. The Jordan curves $C_k$ inside $\operatorname{int}C_w$ such that $\operatorname{int}C_k$ is maximal by inclusion are denoted by $(\tilde{C}_l)_{l =1}^{\tilde{m}}$. We may therefore define a function in $\tilde{W}^{1,1}(\mathbb{R}^2)$ by
\[
f_i^d(x) :=
\begin{cases}
0, & \text{if } x \in \operatorname{ext} C_w, \\[6pt]
f(x) - (p_i + q_i), & \text{if } x \in A_d, \\[6pt]
0, & \text{if } x \in \overline{\operatorname{int} \tilde{C}_l} \subset \operatorname{int} C_w
      \text{ and } \tilde{C}_l \text{ belongs to the level set } p_i + q_i, \\[6pt]
-2 q_i, & \text{if } x \in \overline{\operatorname{int} \tilde{C}_l} \subset \operatorname{int} C_w
      \text{ and } \tilde{C}_l \text{ belongs to the level set } p_i - q_i.
\end{cases}
\]

Note that $\overline{\operatorname{int} C_k} \subset \operatorname{int} C_w$ is equivalent to $C_k \subset \operatorname{int} C_w$. By construction, the values prescribed on adjacent regions coincide on their common boundaries, yielding a continuous transition between the different cases and therefore $f_i^d \in \tilde{W}^{1,1}(\mathbb{R}^2)$ with $\nabla f_i^d = \nabla f$ on $A_d$ and zero elsewhere. For clearness, we fully write out the standard argument to see this fact. Take $\varphi \in C_c^\infty(\mathbb{R}^2;\mathbb{R}^2)$. By the divergence theorem,
\begin{align*}
\int_{\mathbb{R}^2} \operatorname{div}(\varphi)\, f_i^d \, dx 
&= \int_{A_d} \operatorname{div}(\varphi)\, f_i^d \, dx 
+ \sum_{l=1}^{\tilde{m}} \int_{\operatorname{int} \tilde{C}_l} \operatorname{div}(\varphi)\, f_i^d \, dx \\
&= - \int_{A_d} \varphi \cdot \nabla f \, dx 
+ \int_{\partial A_d} (\varphi \cdot N_{\partial A_d}) \, f_i^d \, d\mathcal{H}^1
+ \sum_{l=1}^{\tilde{m}} \int_{\partial \tilde{C}_l} (\varphi \cdot N_{\partial \tilde{C}_l}) \, f_i^d \, d\mathcal{H}^1 \\
&= - \int_{A_d} \varphi \cdot \nabla f \, dx  -\sum_{l=1}^{\tilde{m}} \int_{\partial \tilde{C}_l} (\varphi \cdot N_{\partial \tilde{C}_l}) \, f_i^d \, d\mathcal{H}^1
+ \sum_{l=1}^{\tilde{m}} \int_{\partial \tilde{C}_l} (\varphi \cdot N_{\partial \tilde{C}_l}) \, f_i^d \, d\mathcal{H}^1 \\
&= - \int_{A_d} \varphi \cdot \nabla f \, dx ,
\end{align*}
where we used that $\partial A_d = C_w \cup\bigcup_{l=1}^{\tilde{m}} \tilde{C}_l$ and that $N_{\partial A_d}(x) = -N_{\tilde{C}_l}(x)$ for $x \in \tilde{C}_l$, while $N_{\partial A_d}$ and $N_{\tilde{C}_l}$ denote the outer normals of $\partial A_d$ and $\tilde{C}_l$.

Now we define $f_i := \sum_{j} f_i^j$, where the sum is finite since there are only finitely many connected components $A_j$. It holds that $\nabla f_i = \nabla f$ on $f^{-1}((p_i - q_i, p_i + q_i))$, while $\nabla f_i \equiv 0$ elsewhere.
Applying Lemma~2.5 with $A:= f^{-1}([a,b] \setminus \bigcup_{i=1}^{\infty} (q_i - p_i, q_i + p_i))$,
we obtain
\[
\int_{f^{-1}\Big([a,b] \setminus \bigcup_{i=1}^{\infty} (q_i - p_i, q_i + p_i)\Big)} |\nabla f| \, dx
= \int_{[a,b] \setminus \bigcup_{i=1}^{\infty} (q_i - p_i, q_i + p_i)} \mathcal{H}^1(f^{-1}(t)) \, dt
= 0.
\]

by (1). Since $\mathcal{H}^1(f^{-1}(\cdot)) \in L^1(\mathbb{R})$, we obtain
\[
\Bigg\| \nabla\Bigg(\sum_{i=1}^{n} f_i\Bigg) - \nabla f \Bigg\|_{L^1} \longrightarrow 0
\]

as \(n \to \infty\), and hence
\[
\Bigg\| \sum_{i=1}^{m} \|f_i\| \frac{f_i}{\|f_i\|} - f \Bigg\|_{\tilde{W}^{1,1}} < \varepsilon
\]
for \(m \ge n_0\) with \(n_0 \in \mathbb{N}\) sufficiently large.  

Since also
\[
\sum_{i=1}^{\infty} \|f_i\|_{\tilde{W}^{1,1}} = \mathcal{k} f \mathcal{k}_{\tilde{W}^{1,1}} = 1,
\] 
we conclude that \(f\) can be approximated arbitrarily well by convex combinations of functions in \(S_{\tilde{W}^{1,1}}\) whose gradient supports have Lebesgue measure smaller than \(\varepsilon\), where \(\varepsilon>0\) was arbitrary. Together with the first remark in the previous section and the density of \(C_c^\infty(\mathbb{R}^2)\) in \(\tilde{W}^{1,1}(\mathbb{R}^2)\), this completes the proof.
\end{proof}

Define
\[
Q(x,r_1,r_2)
:= \big\{ y \in \mathbb{R}^2 \,\big|\, |y_1 - x_1| < r_1/2,\ |y_2 - x_2| < r_2/2 \big\},
\]
for $x = (x_1,x_2)$. We now prove a far more intuitive fact concerning the Sobolev space
$W^{1,1}(\mathbb{R}^2)$ endowed with its usual Sobolev norm.

\begin{theorem}
The Banach space $W^{1,1}(\mathbb{R}^2)$ does not have the slice diameter two property.
\end{theorem}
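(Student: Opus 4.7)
The plan is to exhibit a slice of $B_{W^{1,1}(\mathbb{R}^2)}$ whose diameter is strictly less than $2$. The philosophical reason the outcome differs from Theorem~3.1 is that the $L^1$-part of the full Sobolev norm penalises the concentration that was otherwise free under $\|\cdot\|_{\tilde W^{1,1}}$; a functional that ``sees'' the $L^1$-part cannot be made arbitrarily close to its norm without pinning down the $L^1$-profile of the test function. Concretely, I would set $Q := Q(0,1,1)$ and consider the functional $x^*(f) := \frac{1}{M}\int_Q f\,dx$, where
\[
M := \sup_{E \subset \mathbb{R}^2}\frac{|E\cap Q|}{|E|+\operatorname{Per}(E)} \in (0,1).
\]
Positivity is clear, and the strict upper bound $<1$ follows from the free isoperimetric inequality $\operatorname{Per}(E)\geq 2\sqrt{\pi|E|}$. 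A layerwise application of the coarea formula shows $\int_Q f^+ \leq M\,\|f^+\|_{W^{1,1}}$ for every $f\in W^{1,1}(\mathbb{R}^2)$, so after normalising by $M$ one obtains $|x^*(f)|\leq\|f\|_{W^{1,1}}$; smooth approximations of $\chi_{E^*}/(|E^*|+\operatorname{Per}(E^*))$ (where $E^*$ attains the supremum) achieve this bound, hence $\|x^*\|=1$.

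For $f \in S := \operatorname{Slice}(B_{W^{1,1}},x^*,\varepsilon)$, I would decompose $f = f^+ - f^-$ and observe that $\|f\|_{W^{1,1}}=\|f^+\|_{W^{1,1}}+\|f^-\|_{W^{1,1}}$, combined with $x^*(f)\leq x^*(f^+)$, forces $\|f^-\|_{W^{1,1}}\leq\varepsilon$; so up to an $O(\varepsilon)$-error in the diameter one reduces to $f,g\geq 0$. The main step is then a quantitative rigidity statement: if $f\geq 0$ satisfies $\|f\|_{W^{1,1}}\leq 1$ and $\int_Q f > M(1-\varepsilon)$, then $\|f-f^*\|_{L^1}\leq\omega(\varepsilon)$ with some gauge $\omega\to 0$, where $f^* := \chi_{E^*}/(|E^*|+\operatorname{Per}(E^*))$. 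I would prove this via the layer-cake formula: near-equality in the isoperimetric bound above forces most level sets $\{f>t\}$ to have relative ratio $|E_t\cap Q|/(|E_t|+\operatorname{Per}(E_t))$ close to $M$, and quantitative uniqueness of the maximiser $E^*$ (a stability statement for the Cheeger-type problem on $Q$ viewed in $\mathbb{R}^2$) then produces the claimed $L^1$-closeness. As a byproduct $\|f\|_{L^1}=M+O(\omega(\varepsilon))$ and $\|\nabla f\|_{L^1}=(1-M)+O(\omega(\varepsilon))$.

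For $f,g\geq 0$ in $S$, the triangle inequality applied to $f-f^*$ and $g-f^*$ together with the splitting of the Sobolev norm gives
\[
\|f-g\|_{W^{1,1}} = \|f-g\|_{L^1}+\|\nabla(f-g)\|_{L^1} \leq 2\omega(\varepsilon) + 2(1-M) + O(\omega(\varepsilon)),
\]
which is strictly less than $2$ for $\varepsilon$ small, since $M>0$. Combined with the earlier $O(\varepsilon)$-reduction to nonnegative functions, this yields $\operatorname{diam}(S)<2$ and hence the failure of the slice diameter two property.

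The hard part will be the quantitative rigidity above: showing that any two near-maximisers of the relative functional $|E\cap Q|/(|E|+\operatorname{Per}(E))$ must both be $L^1$-close to the same characteristic function. This is in the spirit of the Fusco--Maggi--Pratelli stability of the isoperimetric inequality, but requires a tailored argument because the functional involves both the intersection with $Q$ and the full perimeter in $\mathbb{R}^2$; alternatively, one can first identify the maximising set(s) of $|E\cap Q|/(|E|+\operatorname{Per}(E))$ explicitly (Cheeger set of $Q$, possibly together with inscribed discs), which renders both $M$ and the final diameter bound $2(1-M)<2$ completely quantitative.
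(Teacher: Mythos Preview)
Your strategy is genuinely different from the paper's and is conceptually attractive, but the step you yourself flag as ``the hard part'' is in fact the entire content of the argument, and it is not supplied. The functional $x^*(f)=\tfrac{1}{M}\int_Q f$ does have norm~$1$ (your coarea/layer-cake bound is correct, and $M$ is attained by the Cheeger set of $Q$ via BV-compactness), and the reduction to $f\ge 0$ is fine. What remains is exactly the stability statement: every nonnegative $f$ with $\|f\|_{W^{1,1}}\le 1$ and $\int_Q f>M(1-\varepsilon)$ is $L^1$-close to a fixed profile $f^*$. This does not follow from Fusco--Maggi--Pratelli; it is a stability result for the Cheeger problem on $Q$ (near-optimal sets are $L^1$-close to the unique Cheeger set of the square), followed by a lift from sets to functions through the layer-cake representation. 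Both steps are true---set-level stability via BV-compactness and uniqueness of the Cheeger set of a convex body, the lift by controlling the height distribution of level sets---but neither is a one-liner, and your sketch provides neither. Without the rigidity, nothing in your argument prevents two near-maximal nonnegative $f,g$ from carrying essentially disjoint $L^1$-mass, and then the inequality $\|f-g\|_{W^{1,1}}\le 2\omega(\varepsilon)+2(1-M)+O(\omega(\varepsilon))$ simply does not hold: you only have $\|\nabla(f-g)\|_{L^1}\le 2(1-M)+O(\varepsilon)$, while $\|f-g\|_{L^1}$ is uncontrolled.

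The paper sidesteps this difficulty by choosing a different norming functional,
\[
Tf=\int_{Q} f\,dx+\int_{\mathbb{R}^2} g\cdot\nabla f\,dx,
\]
where $g$ is an explicit unit vector field supported on a thin rectangular frame around $Q$ and pointing inward. For this $T$, repeated applications of the fundamental theorem of calculus along coordinate lines, together with the Poincar\'e inequality on the enlarged square, force any $g_n$ deep in the slice to satisfy $g_n\ge b/2$ on a subset of $Q$ whose complement has measure tending to~$0$, with $b>0$ an explicit constant; no compactness and no Cheeger-type stability are used. The common lower bound on $Q$ then gives the $L^1$-overlap, and hence the diameter bound, directly. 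In short: the paper trades your clean variational picture for a hand-built functional whose rigidity is elementary, whereas your route outsources the crux to a nontrivial (though provable) geometric stability statement.
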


\begin{proof}
Let $a > 0$. We denote by $C$ the Poincaré constant such that \[
\|u\|_{L^1(Q(0,1+2a,1+2a))} \leq C \, \|\nabla u\|_{L^1(Q(0,1+2a,1+2a))},
\]
for every $u \in W_0^{1,1}(Q(0,1+2a,1+2a))$ and $b := \frac{1}{8(C+1)}$. Define $D := (Q(0,1+2a,1) \cup Q(0,1,1+2a)) \setminus Q(0,1,1)$ and the vector field
\[
g(x) := - \Big(\operatorname{sign}(x_1)\,\chi_{\{|x_1| > |x_2|\}}, \;\operatorname{sign}(x_2)\,\chi_{\{|x_2| > |x_1|\}}\Big)\,\chi_D(x).
\]

Consider the functional \(T \in W^{1,1}(\mathbb{R}^2)^*\) defined by
\[
Tf := \int_{Q(0,1,1)} f(x) \, dx + \int_{\mathbb{R}^2} g(x) \cdot \nabla f(x) \, dx.
\]

Obviously \(\mathcal{k}T\mathcal{k} \le 1\). For each \(n \in \mathbb{N}\), define
\[
f_n(x) :=
\begin{cases}
1 - n \, \operatorname{dist}(x, Q(0,1,1)), & \text{if } \operatorname{dist}(x, Q(0,1,1)) < \frac{1}{n}, \\[0.3em]
0, & \text{otherwise}.
\end{cases}
\]

It is straightforward to verify that \(\mathcal{k}f_n/5\mathcal{k}_{W^{1,1}(\mathbb{R}^2)} \to 1\) and \(Tf_n/5 \to 1\) as \(n \to \infty\), which implies \(\mathcal{k}T\mathcal{k} = 1\). Choose an arbitrary sequence \(g_n \in \operatorname{Slice}\big(B_{W^{1,1}}, T, 1/n\big)\) with
\(\|g_n\|_{W^{1,1}(\mathbb{R}^2)} = 1\).
 We will show that the Lebesgue measure of $B_n := \{ x \in Q(0,1,1) \mid g_n(x) < b/2\}$ converges to zero. Since $C_c^\infty(\mathbb{R}^2)$ is dense in $W^{1,1}(\mathbb{R}^2)$, we may assume that $g_n \in C_c^\infty(\mathbb{R}^2)$, since convergence in $L^1$ implies convergence in measure. As \(\mathcal{k} g_n \mathcal{k}_{W^{1,1}(A)} \to 0\) for every \(A \subset \mathbb{R}^2 \setminus Q(0,1+2a,1+2a)\), we also may assume that each \(g_n\) vanishes on \(\mathbb{R}^2 \setminus Q(0,1+2a,1+2a)\). For future reference, note that \(\mathcal{k}\nabla g_n \mathcal{k}_{L^1(Q(0,1,1))} \to 0\). Indeed, suppose that there is a non-relabeled subsequence such that 
\[
\mathcal{k} \nabla g_n \mathcal{k}_{L^1(Q(0,1,1))} > c > 0 \quad \text{for every } n.
\] 
Then, since $g \equiv 0$  on $Q(0,1,1)$ and $\mathcal{k} g \mathcal{k}_\infty \le 1$, we get
\begin{align*}
T g_n 
&\le \int_{Q(0,1,1)} |g_n| \, dx 
   + \int_{\mathbb{R}^2 \setminus Q(0,1,1)} |\nabla g_n| \, dx \\
&= \mathcal{k} g_n \mathcal{k}_{L^1(\mathbb{R}^2)} 
   + \mathcal{k} \nabla g_n \mathcal{k}_{L^1(\mathbb{R}^2)} 
   - \mathcal{k} \nabla g_n \mathcal{k}_{L^1(Q(0,1,1))} \\
&\le 1 - c.
\end{align*}

which contradicts the fact that $T g_n \rightarrow 1$.

 We have
\begin{align*}
\|g_n\|_{L^1(\mathbb{R}^2)} + \|\nabla g_n\|_{L^1(\mathbb{R}^2)} - \frac{1}{n} &< T g_n \\
&= \int_{Q(0,1,1)} g_n(x) \, dx + \int_{\mathbb{R}^2} g(x) \cdot \nabla g_n(x) \, dx
\end{align*}
and hence
\begin{align*}
\|\nabla g_n\|_{L^1(\mathbb{R}^2)} - \frac{1}{n} 
&\le 
\int_{[-\frac{1}{2}-a,-\frac{1}{2}] \times [-\frac{1}{2}, \frac{1}{2}]} \frac{\partial g_n}{\partial x_1} \, dx
- \int_{[\frac{1}{2},\frac{1}{2}+a] \times [-\frac{1}{2}, \frac{1}{2}]} \frac{\partial g_n}{\partial x_1} \, dx \\
&\quad + \int_{[-\frac{1}{2},\frac{1}{2}] \times [-\frac{1}{2}-a, -\frac{1}{2}]} \frac{\partial g_n}{\partial x_2} \, dx
- \int_{[-\frac{1}{2},\frac{1}{2}] \times [\frac{1}{2}, \frac{1}{2}+a]} \frac{\partial g_n}{\partial x_2} \, dx. 
\end{align*}

From this we immediately obtain that

\begin{align*}
0
&\le 
\Bigg(
\int_{[-\frac{1}{2}-a,-\frac{1}{2}] \times [-\frac{1}{2}, \frac{1}{2}]}
\frac{\partial g_n}{\partial x_1} \, dx
- \frac{1}{4}\|\nabla g_n\|_{L^1(\mathbb{R}^2)}
+ \frac{1}{4n}
\Bigg) \\
&\quad +
\Bigg(
-\int_{[\frac{1}{2},\frac{1}{2}+a] \times [-\frac{1}{2}, \frac{1}{2}]}
\frac{\partial g_n}{\partial x_1} \, dx
- \frac{1}{4}\|\nabla g_n\|_{L^1(\mathbb{R}^2)}
+ \frac{1}{4n}
\Bigg) \\
&\quad +
\Bigg(
\int_{[-\frac{1}{2},\frac{1}{2}] \times [-\frac{1}{2}-a, -\frac{1}{2}]}
\frac{\partial g_n}{\partial x_2} \, dx
- \frac{1}{4}\|\nabla g_n\|_{L^1(\mathbb{R}^2)}
+ \frac{1}{4n}
\Bigg) \\
&\quad +
\Bigg(
-\int_{[-\frac{1}{2},\frac{1}{2}] \times [\frac{1}{2}, \frac{1}{2}+a]}
\frac{\partial g_n}{\partial x_2} \, dx
- \frac{1}{4}\|\nabla g_n\|_{L^1(\mathbb{R}^2)}
+ \frac{1}{4n}
\Bigg) \\
&= I_{n,1} + I_{n,2} + I_{n,3} + I_{n,4},
\end{align*}
where $I_{n,1},\dots,I_{n,4}$ denote the four terms above in their natural order. Therefore, for each $n \in \mathbb{N}$, there exists an index $i \in \{1,2,3,4\}$ such that $I_{n,i} \ge 0$. Hence, there exists a $j \in \{1,2,3,4\}$ such that $I_{n,j} \ge 0$ for infinitely many $n$. Without loss of generality, we may assume that $j = 3$.

Therefore, after taking a subsequence, we may assume that for each $n \in \mathbb{N}$
\[
\int_{[-1/2,1/2] \times [-1/2 - a, -1/2]} \frac{\partial g_n}{\partial x_2} \, dx \geq \frac{\mathcal{k} \nabla g_n \mathcal{k}_{L^1(\mathbb{R}^2 )}}{4} - \frac{1}{4n}.
\]

By the fundamental theorem of calculus, we have
\begin{align*}
\int_{-1/2}^{1/2} g_n(x_1,-1/2) \, dx_1 
&= \int_{-1/2}^{1/2} g_n(x_1,-1/2) \, dx_1 - \int_{-1/2}^{1/2} g_n(x_1,-1/2 - a) \, dx_1 \\
&= \int_{[-1/2,1/2] \times [-1/2 - a, -1/2]} 
    \frac{\partial g_n}{\partial x_2} \, dx \\
&\geq \frac{\mathcal{k} \nabla g_n \mathcal{k}_{L^1(\mathbb{R}^2)}}{4} - \frac{1}{4n}. 
\end{align*}

Applying the fundamental theorem of calculus again, it follows that
\begin{align*}
\int_{-1/2}^{1/2} g_n(x_1, q) \, dx_1 
&\geq \frac{\mathcal{k} \nabla g_n \mathcal{k}_{L^1(\mathbb{R}^2)}}{4} - \frac{1}{4n} - \mathcal{k} \nabla g_n \mathcal{k}_{L^1(Q(0,1,1))}, \tag{1}
\end{align*}
for every $q \in (-1/2, 1/2)$.  We combine $\|\nabla g_n\|_{L^1(\mathbb{R}^2)} = \|\nabla g_n\|_{L^1(Q(0,1+2a,1+2a))} \geq \frac{1}{C+1}$ with (1) to obtain
\[
\int_{-1/2}^{1/2} g_n(x_1, q) \, dx_1 \geq \frac{1}{4(C+1)} - \frac{1}{4n} - \|\nabla g_n\|_{L^1(Q(0,1,1))}.
\]
Choose \(n_0 \in \mathbb{N}\) sufficiently large such that for all \(n \ge n_0\) 
\[
\int_{-1/2}^{1/2} g_n(x_1, q) \, dx_1 \ge \frac{1}{8(C+1)} = b, \tag{2}
\]
for every $q \in (-1/2,1/2)$. Note that we have used the fact that \(\mathcal{k}\nabla g_n \mathcal{k}_{L^1(Q(0,1,1))} \to 0\), which was proved above. After discarding the first $n_0$ terms of the sequence, we may assume that this holds for all $n \in \mathbb{N}$.
Now suppose that 
\[
\mathcal{L}^2(B_n) > d > 0
\]
for a non-relabeled subsequence and a constant $ 1 > d > 0$. Then for the orthogonal projection $P_1$  onto the first axis, we still have $\mathcal{L}(P_1(B_n)) > d$. Note that $B_n$ is open as $g_n$ is smooth. For every $x \in P_1(B_n)$ choose an interval $I_x$ such that
$I_x \times \{h_x\} \subset B_n$ for some $h_x \in (-1/2,1/2)$.
The family $\{I_x\}_{x \in P_1(B_n)}$ forms a Vitali covering of $P_1(B_n)$.
Hence, there exists a finite collection of pairwise disjoint intervals
$(I_{x_i^n})_{i=1}^{m_n}$ with corresponding heights
$(h_{x_i^n})_{i=1}^{m_n}$ such that
$I_{x_i^n} \times \{h_{x_i^n}\} \subset B_n$ for all $i$ and
\[
\mathcal{L}\!\left(P_1(B_n) \setminus \bigcup_{i=1}^{m_n} I_{x_i^n}\right)
<  (\mathcal{L}(P_1(B_n)) - d)/2.
\]
In particular,
\[
\mathcal{L}\!\left(\bigcup_{i=1}^{m_n} I_{x_i^n}\right) > d. \tag{3}
\]
We have
\[
\sum_{i=1}^{m_n} \int_{I_{x_i^n}} g_n(x_1,h_{x_i^n}) \, dx_1
\leq \mathcal{L}( \bigcup_{i=1}^{m_n} I_{x_i^n})\,\frac{b}{2}.
\]
Applying the fundamental theorem of calculus yields
\[
\int_{\bigcup_{i=1}^{m_n} I_{x_i^n}} g_n(x_1,q)\,dx_1
\leq \mathcal{L}\!\left(\bigcup_{i=1}^{m_n} I_{x_i^n}\right)\frac{b}{2}
+ \|\nabla g_n\|_{L^1(Q(0,1,1))},
\]
for every $q \in (-1/2,1/2)$. After integrating, we obtain
\[
\int_{\bigcup_{i=1}^{m_n} I_{x_i^n} \times [-1/2,1/2]} g_n(x_1,x_2)\,dx_1dx_2
\leq \mathcal{L}\!\left(\bigcup_{i=1}^{m_n} I_{x_i^n}\right)\frac{b}{2}
+ \|\nabla g_n\|_{L^1(Q(0,1,1))}.
\]
Consequently, there exists $e_n \in \bigcup_{i=1}^{m_n} I_{x_i^n}$ such that
\[
\mathcal{L}\!\left(\bigcup_{i=1}^{m_n} I_{x_i^n}\right)
\int_{-1/2}^{1/2} g_n(e_n,x_2)\,dx_2
\leq \mathcal{L}\!\left(\bigcup_{i=1}^{m_n} I_{x_i^n}\right)\frac{b}{2}
+ \|\nabla g_n\|_{L^1(Q(0,1,1))},
\]
and hence with (3)
\[
\int_{-1/2}^{1/2} g_n(e_n,x_2)\,dx_2
\leq \frac{b}{2}
+ \frac{1}{d}\,\|\nabla g_n\|_{L^1(Q(0,1,1))}.
\]
Using the fundamental theorem of calculus one last time and integrating with respect to the first variable, we obtain
\[
\int_{-1/2}^{1/2}\int_{-1/2}^{1/2} g_n(x_1,x_2)\,dx_2\,dx_1
\leq \frac{b}{2}
+ \frac{1}{d}\,\|\nabla g_n\|_{L^1(Q(0,1,1))}
+ \|\nabla g_n\|_{L^1(Q(0,1,1))}.
\]
Choosing $n \in \mathbb{N}$ large enough we get 
\[
\int_{Q(0,1,1)}g_n \, dx < 2b/3,
\]
since \(\mathcal{k}\nabla g_n \mathcal{k}_{L^1(Q(0,1,1))} \to 0\). This contradicts~(2). Therefore we have $\mathcal{L}^2(B_n) \rightarrow 0$. 

Suppose now that every slice has diameter two. Then we can choose $f_n,h_n \in \operatorname{Slice}(B_{W^{1,1}},T,1/n)$ such that $\|h_n-f_n\|_{W^{1,1}(\mathbb{R}^2)} \to 2$. We have already shown that the Lebesgue measure of the sets $A_n := \{x \in Q(0,1,1) \mid f_n(x) < b/2\}$ and $C_n :=\{x \in Q(0,1,1)\mid h_n(x) < b/2\}$ converges to zero. Choose $m \in \mathbb{N}$ sufficiently large such that $\mathcal{L}^2(B_m) + \mathcal{L}^2(C_m) < \tfrac14$ and 
\[
\|f_m-h_m\|_{W^{1,1}(\mathbb{R}^2)} > 2-\tfrac{b}{2}. \tag{4}
\]
Then
\[
\mathcal{L}^2\big((Q(0,1,1)\setminus B_m)\cap (Q(0,1,1)\setminus C_m)\big) > \tfrac12.
\]
Consequently,
\[
\|f_m-h_m\|_{L^1(\mathbb{R}^2)} \le \|f_m\|_{L^1(\mathbb{R}^2)} + \|h_m\|_{L^1(\mathbb{R}^2)} - b/2,
\]
which yields a contradiction with (4). This completes the proof.

\end{proof}

We remark that stronger versions of the above theorem can be derived by optimizing the proof.

\section*{Acknowledgements}
The author expresses his deepest thanks to Dirk Werner, who introduced him to the theory of Daugavet spaces, posed the question about a variant of Theorem~3.1, and provided valuable advice on several parts of this paper, improving readability and correcting flaws in some arguments.

\bibliographystyle{plain}
\bibliography{mybib}

\end{document}